\newtheorem{thm}{Theorem}
\newtheorem{lem}{Lemma}
\newtheorem{cor}{Corollary}
\newtheorem{conj}{Conjecture}
\theoremstyle{remark}
\newtheorem{rem}{Remark}
\theoremstyle{definition}
\title{Symplectic 4-manifolds with fixed point free circle actions}
\author{Jonathan Bowden}
\address{Mathematisches Institut, Universit\"at Augsburg, Universit\"atsstr.\ 14, 86159 Augsburg, Germany}
\curraddr{Max-Planck-Institut f\"ur Mathematik, Vivatsgasse 7, 53111 Bonn, Germany}
\email{jonathan.bowden@math.uni-augsburg.de}
\date{\today}
\subjclass[2010]{Primary 57R17; Secondary 57N10, 57N13}
\begin{document}
\maketitle
\begin{abstract}
We show that recent results of Friedl-Vidussi and Chen imply that a symplectic 4-manifold admits a fixed point free circle action if and only if it admits a symplectic structure that is invariant under the action and we give a complete description of the symplectic cone in this case. This then completes the topological characterisation of symplectic 4-manifolds that admit non-trivial circle actions.
\end{abstract}

\section{Introduction}
Recently Friedl and Vidussi, \cite{FVi1} solved the long standing Taubes Conjecture, which classifies which 4-manifolds of the form $M \times S^1$ admit symplectic forms. Moreover, they determined exactly which cohomology classes can be represented by symplectic forms. Using recent results of D. Wise, \cite{Wi} they have extended their results to the case of non-trivial $S^1$-bundles in \cite{FVi3}. In this note we observe that their results as well of those of Chen, who obtained partial results in the fixed point free case in \cite{Chen}, imply the analogue of (\cite{FVi3}, Theorem 1.3) for all fixed point free circle actions.

Before stating our main result we fix some notation and terminology. Let $X \stackrel{p} \longrightarrow M$ be an orientable 4-manifold with a fixed point free circle action and quotient space $M = X/S^1$. The quotient space is an orbifold whose underlying topological space $|M|$ is a manifold since all the stabilisers of the $S^1$-action are necessarily cyclic and the singular locus consists of a collection of branching circles (cf.\ \cite{Bal}, \cite{Fin}). 

We let $M_{reg}$ denote the complement of an open tubular neighbourhood of the singular locus of $M$ and $X_{reg} = p^{-1}(M_{reg})$, which is an honest $S^1$-bundle so that the pushforward map $p_*$ is well-defined for cohomology classes in $H^*(X_{reg},\mathbb{R})$. The manifold $M_{reg}$ has toroidal boundary and thus one may define the Thurston norm on $H^1(M_{reg},\mathbb{R})$ in the usual fashion. Finally for $\psi \in H^2(X,\mathbb{R})$ we let $\psi_{reg}$ denote the restriction of $\psi$ to $X_{reg}$.
\begin{thm}\label{class_eff}
Let $X \stackrel{p} \longrightarrow M$ be an oriented manifold admitting a fixed point free $S^1$-action with quotient space $M$ and let $\psi \in H^2(X,\mathbb{R})$. Then the following are equivalent:
\begin{enumerate}
\item $\psi$ can be represented by a symplectic form,
\item $\psi$ can be represented by an $S^1$-invariant symplectic form,
\item $\psi^2 >0$ and $p_*\psi_{reg} \in H^1(M_{reg},\mathbb{R})$ lies in the open cone over a fibered face of the Thurston norm ball and is the restriction of a class in $H^1(|M|,\mathbb{R})$.
\end{enumerate}
\end{thm}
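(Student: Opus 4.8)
\emph{Proof strategy.} Since $(2)\Rightarrow(1)$ is immediate, the plan is to establish $(1)\Rightarrow(3)$ and $(3)\Rightarrow(2)$; together with $(2)\Rightarrow(1)$ these close the cycle, and the resulting equivalence of $(1)$ and $(3)$ is itself the promised description of the symplectic cone. The bulk of $(1)\Rightarrow(3)$ is quoted from \cite{FVi3} and \cite{Chen}, while $(3)\Rightarrow(2)$ is a Thurston-type construction.

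Suppose first that $\psi=[\omega]$ for a symplectic form $\omega$, so that $\psi^2=\int_X\omega\wedge\omega>0$ at once. Restricting $\omega$ to $X_{reg}$ produces a symplectic form on the honest $S^1$-bundle $X_{reg}\to M_{reg}$ in the class $\psi_{reg}$. The bundle case of the statement --- Friedl--Vidussi \cite{FVi3}, Theorem 1.3 (via Wise \cite{Wi}), together with the partial results of Chen \cite{Chen} in the fixed point free setting --- then forces $p_*\psi_{reg}$ to lie in the open cone over a fibered face of the Thurston norm ball of $M_{reg}$; since \cite{FVi3} is phrased for closed base manifolds, the toroidal boundary of $M_{reg}$ is handled using the closed Seifert pieces of $X$ sitting over neighbourhoods of the exceptional orbits (capping off, or doubling along $\partial M_{reg}$). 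It remains to see that $p_*\psi_{reg}$ is the restriction of a class in $H^1(|M|,\mathbb{R})$. Now $|M|$ is obtained from $M_{reg}$ by Dehn filling along the meridians $\mu_i$ bounding meridian disks of the solid-torus neighbourhoods of the branching circles, so it suffices that $p_*\psi_{reg}$ annihilate each $\mu_i$. But the torus $T_i'=p^{-1}(\mu_i)\subset\partial X_{reg}$ bounds a $3$-chain in $X$, namely the preimage of such a meridian disk, so $\langle p_*\psi_{reg},\mu_i\rangle=\langle\psi_{reg},[T_i']\rangle=\langle\psi,[T_i']\rangle=0$, as required.

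For $(3)\Rightarrow(2)$, fix $\alpha\in H^1(|M|,\mathbb{R})$ restricting to $p_*\psi_{reg}$, and write $V_i\subset|M|$ for the solid torus neighbourhoods of the branching circles and $P_i=p^{-1}(\nu(c_i))\subset X$ for their preimages. As $p_*\psi_{reg}$ lies in an open fibered cone, a nearby rational class is carried by a fibration $f\colon M_{reg}\to S^1$, and because $\alpha$ extends over $|M|$ the fibre of $f$ meets each $\partial V_i$ in meridians, so that $f$ and a connection for the action extend over the $V_i$ and the Seifert pieces $P_i$. Now carry out Thurston's construction equivariantly on $X_{reg}$: choose a connection $1$-form $\theta$ for the action with curvature $p^*\eta$, a closed nowhere-zero $1$-form $\gamma$ on $M_{reg}$ proportional to $f^*d\phi$, and a $2$-form $\beta$ with $d\beta=\gamma\wedge\eta$ (the obstruction lies in $H^3(M_{reg},\mathbb{R})$, which vanishes); adding a closed form in the class dictated by $\psi_{reg}$ and then a suitable exact form, one arranges simultaneously that $\omega_{reg}:=p^*\gamma\wedge\theta+p^*\beta$ represents $\psi_{reg}$ and that $\beta$ restricts to a positive area form on every fibre of $f$. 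The crucial point is that $\int_{\mathrm{fibre}}\beta=\tfrac12\psi^2$, so the hypothesis $\psi^2>0$ is exactly the positivity needed for this last adjustment to be possible. The form $\omega_{reg}$ is then closed, non-degenerate and $S^1$-invariant, and it is extended over each $P_i$ by the explicit invariant symplectic model on the Seifert pieces, matched along $\partial X_{reg}$; this yields the desired $S^1$-invariant symplectic representative of $\psi$.

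The main obstacle is the implication $(1)\Rightarrow(3)$: although \cite{FVi3} and \cite{Chen} carry the essential content, one must genuinely reduce the fixed point free case --- with its exceptional orbits and the resulting Seifert, rather than honest bundle, structure --- to the $S^1$-bundle theorem, and cope with the non-closed base $M_{reg}$. The construction in $(3)\Rightarrow(2)$ is then a standard, if slightly delicate, Thurston-type argument, and tracing through it gives the stated description of the symplectic cone.
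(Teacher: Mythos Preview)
Your cycle $(2)\Rightarrow(1)\Rightarrow(3)\Rightarrow(2)$ matches the paper's, and your meridian argument showing that $p_*\psi_{reg}$ extends to $H^1(|M|,\mathbb{R})$ is fine. However, the core of your $(1)\Rightarrow(3)$ has a genuine gap. The Friedl--Vidussi theorem in \cite{FVi3} is proved for a \emph{closed} symplectic $4$-manifold that is an $S^1$-bundle over a \emph{closed} $3$-manifold; its proof runs through Seiberg--Witten invariants and twisted Alexander polynomials, neither of which is available once you cut down to $X_{reg}\to M_{reg}$, both of which have boundary. Your parenthetical ``capping off, or doubling along $\partial M_{reg}$'' does not repair this: doubling $X_{reg}$ does not carry a symplectic form extending $\omega|_{X_{reg}}$, and capping with the Seifert pieces $P_i$ simply returns you to $X$, which is no longer an honest bundle. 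So you never actually land in the hypotheses of \cite{FVi3}.

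The paper's route around this is the point you are missing. By Chen's Proposition~1.8 the base orbifold $M$ is \emph{very good}: there is a closed manifold $\overline{M}$ with a finite $G$-action and $M=\overline{M}/G$, and the pullback $\overline{X}=\pi^*X$ is an \emph{unramified} finite cover of $X$ which is a genuine $S^1$-bundle over the closed manifold $\overline{M}$. Now $\overline{\pi}^*\omega$ is symplectic on the closed $\overline{X}$, \cite{FVi3} applies directly, and one obtains that $\overline{p}_*[\overline{\pi}^*\omega]\in H^1(\overline{M},\mathbb{R})^G$ is fibered. Descending to $M_{reg}$ then requires the paper's Lemma~\ref{fin_action} (an equivariant version of Edmonds--Livingston), which is where the actual work sits. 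Your $(3)\Rightarrow(2)$ has the same structural issue: the paper again passes to $\overline{X}$ and performs the Fern\'andez--Gray--Morgan/Friedl--Vidussi construction $G$-equivariantly (averaging the angular form, $\beta$, and $\overline{\Omega}$), then descends; your direct construction on $X_{reg}$ followed by an ad hoc extension over the $P_i$ is at best incomplete --- in particular, replacing $p_*\psi_{reg}$ by a ``nearby rational class'' changes the cohomology class, so the form you build does not represent $\psi$, and the claimed identity $\int_{\text{fibre}}\beta=\tfrac12\psi^2$ is not justified.
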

Note that if a class $\phi \in H^1(|M|,\mathbb{R})$ is integral, then $\phi$ can be represented by a fibration over $S^1$ that is transverse to the singular locus of $|M|$ if and only if its restriction to $M_{reg}$, which we denote by $\phi_{reg}$, is fibered. Recall that a fibration of a manifold with boundary is required to be transverse to the boundary. Furthermore, since $\phi_{reg}$ is the restriction of a class in $H^1(|M|,\mathbb{R})$, it automatically vanishes on the meridian classes in $\partial M_{reg}$ so that if $\phi_{reg}$ is fibered, then the induced fibration on the boundary is necessarily meridional. Thus the fibration dual to $\phi_{reg}$ extends to $|M|$ in the desired way by filling in discs near the singular locus. In particular, part (3) of Theorem \ref{class_eff} implies that the underlying manifold $|M|$ is fibered and we obtain a positive answer to the following conjecture, which implies (\cite{Chen}, Conjecture 1.7) as a special case.
\begin{conj}[Generalised Taubes conjecture]\label{Taubes_gen}
Let $X$ be a symplectic 4-manifold that admits a non-trivial fixed point free circle action with quotient orbifold $M$. Then the (possibly empty) singular locus $L$ of $M$ is a meridionally fibered link.
\end{conj}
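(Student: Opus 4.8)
The plan is to read Conjecture~\ref{Taubes_gen} off Theorem~\ref{class_eff} together with the remarks in the paragraph immediately following it; essentially all of the content is already packaged in that theorem, and what remains is a short approximation argument.

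First I would fix a symplectic form on $X$ and let $\psi \in H^2(X,\mathbb{R})$ be its cohomology class. By the implication (1)$\Rightarrow$(3) of Theorem~\ref{class_eff}, $p_*\psi_{reg}$ lies in the open cone $C$ over a fibered face of the Thurston norm ball of $H^1(M_{reg},\mathbb{R})$, and moreover $p_*\psi_{reg}$ equals the restriction of some class $\Psi \in H^1(|M|,\mathbb{R})$. Writing $R\colon H^1(|M|,\mathbb{R}) \to H^1(M_{reg},\mathbb{R})$ for the restriction map, the set $R^{-1}(C)$ is then a non-empty open subset of $H^1(|M|,\mathbb{R})$ (it contains $\Psi$), and it is a cone. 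Since rational directions are dense and $R^{-1}(C)$ is scale-invariant, $R^{-1}(C)$ contains a primitive integral class $\phi \in H^1(|M|,\mathbb{Z})$; set $\phi_{reg} := R(\phi) \in C$.

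Next I would invoke Thurston's fibration criterion: as $\phi_{reg}$ lies in the open cone over a fibered face, it is dual to a fibration $M_{reg} \to S^1$ transverse to $\partial M_{reg}$. Because $\phi_{reg}$ is the restriction of a class on all of $|M|$, it kills every meridian of the singular locus $L$; on each component $T$ of $\partial M_{reg}$ the fibre of $\phi_{reg}$ meets $T$ in a primitive curve lying in $\ker(\phi_{reg}|_T)$, and since the meridian of the corresponding component of $L$ is also primitive and lies in this kernel, the fibre curve is precisely a meridian. Hence the induced fibration on $\partial M_{reg}$ is meridional, which by definition says that $L$ is a meridionally fibered link (the assertion being vacuous when $L = \emptyset$, in which case one simply records that $|M| = M_{reg}$ fibers over $S^1$). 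Capping off with meridian discs along $L$, exactly as in the paragraph after Theorem~\ref{class_eff}, then extends the fibration over $|M|$ with $L$ transverse to the fibres.

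I do not anticipate a serious obstacle, since the analytic and group-theoretic input of Friedl--Vidussi, Wise and Chen has been entirely absorbed into Theorem~\ref{class_eff}. The one point that genuinely needs care is that the rational approximation must be carried out upstairs, in $H^1(|M|,\mathbb{R})$, rather than in $H^1(M_{reg},\mathbb{R})$: only then is the resulting fibered class guaranteed to extend over a neighbourhood of $L$ with meridional boundary behaviour, which is exactly the conclusion of the conjecture (and hence, as a special case, of Conjecture~1.7 of \cite{Chen}).
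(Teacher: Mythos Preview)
Your proposal is correct and follows the same route as the paper: the conjecture is deduced from the implication (1)$\Rightarrow$(3) of Theorem~\ref{class_eff} together with the paragraph preceding the conjecture, and your explicit rational-approximation step in $H^1(|M|,\mathbb{R})$ is precisely the content left implicit in the paper's sentence ``In particular, part (3) of Theorem~\ref{class_eff} implies that the underlying manifold $|M|$ is fibered''.
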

Furthermore, as noted in (\cite{Chen}, p.\ 6), Theorem \ref{class_eff} completes the characterisation of which symplectic manifolds admit non-trivial $S^1$-actions. For Baldridge, \cite{Bal} showed that if a non-trivial $S^1$-action on a symplectic 4-manifold has fixed points then $X$ is rational or ruled and thus admits an $S^1$-invariant symplectic form for \emph{some} non-trivial $S^1$-action. In view of this we obtain the following corollary.
\begin{cor}
Let $X$ be a symplectic 4-manifold that admits a non-trivial $S^1$-action. Then either the action is fixed point free and the quotient space fibers over $S^1$ or $X$ is rational or ruled. In either case, $X$ admits a non-trivial symplectic $S^1$-action.
\end{cor}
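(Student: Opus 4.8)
The plan is to argue by cases, according to whether the given circle action has fixed points, reading off everything from Theorem \ref{class_eff} together with Baldridge's theorem.

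First I would dispose of the case in which the $S^1$-action on $X$ has a non-empty fixed point set. Here Baldridge's theorem \cite{Bal}, as quoted above, tells us that $X$ is rational or ruled, and, as Baldridge also observes, such a $4$-manifold carries a symplectic form invariant under \emph{some} non-trivial circle action. This already gives both the stated dichotomy (the second alternative) and the final assertion in this case, so nothing further is needed.

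Next I would treat the fixed point free case, where we are exactly in the setting of Theorem \ref{class_eff} with $M = X/S^1$. Let $\omega$ be a symplectic form on $X$ and put $\psi = [\omega]$. Applying the implication $(1) \Rightarrow (3)$ of Theorem \ref{class_eff}, the class $p_*\psi_{reg} \in H^1(M_{reg},\mathbb{R})$ lies in the open cone $C$ over a fibered face of the Thurston norm ball of $M_{reg}$ and is the restriction of a class in $H^1(|M|,\mathbb{R})$. To turn this into an actual fibration I would use that the image of the restriction map $H^1(|M|,\mathbb{R}) \to H^1(M_{reg},\mathbb{R})$ is a subspace defined over $\mathbb{Q}$ (it is the annihilator of the integral meridian classes) and that $C$ is open; hence $C$ contains a rational class $\phi_{reg}$ which, after scaling, is the restriction of an integral class $\phi \in H^1(|M|;\mathbb{Z})$. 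Since $\phi_{reg}$ lies in the fibered cone it is fibered, and by the discussion following Theorem \ref{class_eff} the dual fibration of $M_{reg}$ is meridional on $\partial M_{reg}$, hence extends by filling in discs near the singular locus to a fibration of $|M|$ over $S^1$ representing $\phi$; thus the quotient space fibers over $S^1$. Finally, the implication $(1) \Rightarrow (2)$ of the theorem furnishes an $S^1$-invariant symplectic form in the class $\psi$, so the given non-trivial action is symplectic after changing the symplectic representative. Combining the two cases, $X$ admits a non-trivial symplectic $S^1$-action either way.

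The only point that requires an argument beyond citing Theorem \ref{class_eff} and Baldridge's theorem is the passage from the purely cohomological statement in part (3) to a genuine fibration of the compact space $|M|$; this is where one invokes that fibered faces are open cones and that the relevant subspace of $H^1(M_{reg},\mathbb{R})$ is rational, so that integral fibered classes extending over $|M|$ are available. The remaining work is just the case bookkeeping and checking that non-triviality of the action is not lost in either branch.
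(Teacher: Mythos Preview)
Your proposal is correct and follows essentially the same approach as the paper, which does not give a separate proof of the corollary but regards it as immediate from Theorem~\ref{class_eff} together with Baldridge's result and the discussion preceding the corollary. You spell out more carefully than the paper does how to pass from condition~(3) to an honest fibration of $|M|$---namely by perturbing inside the rational subspace $\operatorname{Im}\bigl(H^1(|M|,\mathbb{R})\to H^1(M_{reg},\mathbb{R})\bigr)$ to an integral fibered class---whereas the paper simply asserts that part~(3) implies $|M|$ is fibered.
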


\section{Proof of Theorem \ref{class_eff}}
The proof of Theorem \ref{class_eff} is based on the following lemma, which provides a generalisation of (\cite{EL}, Theorem 5.2) to include irrational classes. For the proof we assume a certain familiarity with the basic properties of the Thurston norm (cf.\ \cite{Th}).
\begin{lem}\label{fin_action}
Let $\overline{M}$ be a 3-manifold with an orientation preserving smooth action of a finite group $G$ and quotient orbifold $M=\overline{M}/G$. An element $\overline{\phi}$ in the invariant subspace $H^1(\overline{M},\mathbb{R})^G$ admits a non-degenerate de Rham representative if and only if it admits a non-degenerate de Rham representative that is $G$-invariant. 

In particular, the restriction of the associated class $\phi \in H^1(|M|,\mathbb{R}) \cong H^1(\overline{M},\mathbb{R})^G$ to $M_{reg}$ lies in the open cone over a fibered face of the Thurston norm ball.

\end{lem}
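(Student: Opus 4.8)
\emph{Proof sketch.} The implication ``$G$-invariant representative $\Rightarrow$ representative'' is trivial, so assume $\overline{\phi}\in H^1(\overline M,\mathbb R)^G$ admits a nowhere-vanishing closed $1$-form representative; we must produce a $G$-invariant one. The naive idea of averaging a given representative over $G$ fails, since the average of nowhere-zero $1$-forms may vanish. The plan is instead to first locate $\overline\phi$ inside the fibered cone, observe that this cone is $G$-invariant, and then average \emph{relative to a fixed $G$-invariant flow} that is positively transverse to the relevant foliations.

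First I would invoke the results of Thurston and Fried (cf.\ \cite{Th}): the set $\mathcal F\subset H^1(\overline M,\mathbb R)$ of classes admitting a nowhere-vanishing closed $1$-form representative is exactly the union of the open cones $\mathbb R_{>0}\cdot\mathrm{int}(F)$ over the fibered faces $F$ of the Thurston norm ball, and these cones are pairwise disjoint; in particular $\overline\phi$ lies in $\mathbb R_{>0}\cdot\mathrm{int}(F)$ for a unique fibered face $F$. Since each $g\in G$ acts on $\overline M$ by an orientation-preserving diffeomorphism, $g^*$ is an isometry of the Thurston norm and hence permutes the fibered faces; as $g^*\overline\phi=\overline\phi$ and the cones over distinct faces are disjoint, $g^*F=F$ for every $g$, i.e.\ $F$ is $G$-invariant. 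The fixed subspace $V:=H^1(\overline M,\mathbb R)^G$ is the real span of $H^1(\overline M,\mathbb Q)^G$, so rational classes are dense in it; since $\mathbb R_{>0}\cdot\mathrm{int}(F)\cap V$ is a nonempty open subcone of $V$ (it contains $\overline\phi$), it contains an integral class $\overline\phi_0$. Thus $\overline\phi_0$ is integral, $G$-invariant, and, lying in a fibered cone, is a fibered class of $\overline M$.

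The crux is the passage from $\overline\phi_0$ back to $\overline\phi$. By (\cite{EL}, Theorem~5.2) --- the integral case of precisely this statement --- the class $\overline\phi_0$ is represented by a fibration $f\colon\overline M\to S^1$ which can be chosen $G$-equivariant for the trivial $G$-action on $S^1$; equivalently $f\circ g=f$ for all $g$, so $G$ preserves each fibre of $f$. Fixing a $G$-invariant Riemannian metric and setting $X:=\mathrm{grad}(f)/|\mathrm{grad}(f)|^2$ yields a $G$-invariant nowhere-zero vector field with $df(X)\equiv 1$, whose flow is the suspension flow of a monodromy map of $f$; hence by Fried's theorem on cross sections to flows the set of cohomology classes admitting a closed $1$-form representative that is strictly positive on $X$ is exactly $\mathbb R_{>0}\cdot\mathrm{int}(F)$, and in particular it contains $\overline\phi$. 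Choose such a representative $\overline\omega$ of $\overline\phi$ and put $\overline\omega^{\mathrm{av}}:=\tfrac1{|G|}\sum_{g\in G}g^*\overline\omega$. Then $\overline\omega^{\mathrm{av}}$ is closed and $G$-invariant, it represents $\overline\phi$ because $\overline\phi$ is $G$-fixed, and since $g_*X=X$ for all $g$ we have $(g^*\overline\omega)(X)=\overline\omega(X)\circ g>0$ pointwise for each $g$, whence $\overline\omega^{\mathrm{av}}(X)>0$ everywhere and $\overline\omega^{\mathrm{av}}$ is nowhere vanishing. This is the desired $G$-invariant non-degenerate representative. I expect the construction of the $G$-invariant transverse flow from an equivariant fibration to be the only genuine obstacle; everything else is formal once one resists the temptation to average the given form directly.

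For the final assertion, $G$ acts freely on $\overline M_{reg}:=p^{-1}(M_{reg})$, which is therefore a genuine $|G|$-fold cover of $M_{reg}$; the $G$-invariant nowhere-vanishing closed $1$-form $\overline\omega^{\mathrm{av}}$ restricts to one on $\overline M_{reg}$ and hence descends to a nowhere-vanishing closed $1$-form on $M_{reg}$ representing the class $\phi|_{M_{reg}}$ corresponding to $\overline\phi$ under $H^1(|M|,\mathbb R)\cong H^1(\overline M,\mathbb R)^G$. Applying the Thurston--Fried characterisation once more, now on the compact $3$-manifold $M_{reg}$, shows that $\phi|_{M_{reg}}$ lies in the open cone over a fibered face of the Thurston norm ball of $M_{reg}$.
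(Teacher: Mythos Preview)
Your proof is correct and takes a genuinely different route from the paper's. The paper splits into rational and irrational cases: for rational $\overline\phi$ it invokes \cite{EL} directly; for irrational $\overline\phi$ it passes to $M_{reg}$, argues by density of rationals and rationality of the Thurston ball that $\phi_{reg}$ lies in an open fibered cone, produces a nondegenerate representative $\eta_{reg}$ on $M_{reg}$, checks that its boundary foliation is meridional, extends $\eta_{reg}$ by hand across the singular locus to a form $\eta$ on $|M|$, and finally pulls $\eta$ back to $\overline M$. You instead stay on $\overline M$ throughout: after locating $\overline\phi$ in a $G$-invariant fibered cone and picking an integral $G$-fixed class $\overline\phi_0$ in the same cone, you apply \cite{EL} once to $\overline\phi_0$, extract a $G$-invariant transverse vector field $X$, and then use Fried's cross-section theorem to realise $\overline\phi$ by a form positive on $X$, which survives averaging. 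Your argument is cleaner and handles rational and irrational classes uniformly, at the cost of importing Fried's theorem; the paper's argument avoids that import but has to carry out an explicit extension across the branching locus.

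One small imprecision: \cite{EL} produces a $G$-equivariant fibration $f\colon\overline M\to S^1$ for \emph{some} $G$-action on $S^1$, not necessarily the trivial one (e.g.\ $\mathbb Z/2$ acting on $\Sigma\times S^1$ by an involution on $\Sigma$ and rotation by $\pi$ on $S^1$). This does not affect your argument: the induced action on $S^1$ can be taken to be by rotations, so one still has $g^*(df)=df$, and with a $G$-invariant metric $\mathrm{grad}(f)$ and hence $X$ remain $G$-invariant, which is all you use. Also, in your final paragraph the map you call $p$ is the quotient $\pi\colon\overline M\to M$ in the paper's notation.
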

\begin{proof}
We first assume that $\overline{\phi}$ is rational. Since nothing changes after multiplying with positive constants, we may assume that $\overline{\phi}$ is in fact integral. In this case the first claim is just a restatement of (\cite{EL}, Theorem 5.2), which can be applied in complete generality in view of (\cite{MS}, Theorem 8.1).  Note that the assumption $H^1(\overline{M},\mathbb{Q})^G = \mathbb{Q}$ in (\cite{EL}, Theorem 5.2) can be replaced by the fact that the fibration is given by a fibered class $\overline{\phi}$ that is $G$-invariant. Moreover, the proof in \cite{EL} actually gives a fibration that is transverse to the branching locus in $\overline{M}$. The quotient map $\pi$ induces an isomorphism $H^1(\overline{M},\mathbb{R})^G \cong H^1(|M|,\mathbb{R})$ so that there is a unique class $\phi$ with $\overline{\phi} = \pi^* \phi$ and the fibration dual to $\overline{\phi}$ descends to a fibration of $|M|$ dual to $\phi$. Finally since the fibration is transverse to the singular locus it follows that the restriction of $\phi$ to $M_{reg}$ is fibered.

We next assume that $\overline{\phi}$ is irrational and let $\phi$ be the unique class with $\overline{\phi} = \pi^* \phi$. We let $\iota_{reg}$ denote the natural inclusion $M_{reg} \hookrightarrow |M|$ and set $V = Im(\iota^*_{reg})$. By the previous case all rational classes in $V$ that are sufficiently close to $\phi_{reg} = \iota^*_{reg} \phi$ are fibered. If $\phi_{reg}$ itself did not lie in the open cone over a fibered face of the Thurston unit ball, then it must lie in the closed cone over the boundary of a fibered face by the assumption that it can be approximated by fibered elements. Since the Thurston unit ball is rational, the intersection of the closed cone containing $\phi_{reg}$ with $V$ must contain non-fibered rational points arbitrarily close to $\phi_{reg}$, which gives a contradiction. Thus $\phi_{reg}$ admits a non-degenerate de Rham representative $\eta_{reg}$. Since $\eta_{reg}$ can be approximated by rational classes that are fibered and restrict to meridional fibrations on the boundary of $M_{reg}$, the foliation induced by $\eta_{reg}$ on the boundary is also meridional.

We let $(z,\theta) \in D^2 \times S^1$ denote coordinates on a tubular neighbourhood of a component of the branching locus of $|M|$. After applying a suitable isotopy we may assume that $\eta_{reg}$ has the form $f(\theta)d \theta$ near $\partial D^2 \times S^1$. It follows that $\eta_{reg}$ extends to a non-degenerate closed form $\eta$ which is transverse to the branching locus of $|M|$. The pullback $\overline{\eta} = \pi^*\eta$ then gives the desired non-degenerate $G$-equivariant representative of $\overline{\phi}$.

\end{proof}
\begin{proof}[Proof of Theorem \ref{class_eff}]
The implication $(2)  \Longrightarrow (1)$ is trivial.

$(1)  \Longrightarrow (3)$: Let $(X,\omega)$ be a symplectic manifold with a fixed point free $S^1$-action and quotient space $M$. By (\cite{Chen}, Proposition 1.8) there is a manifold $\overline{M}$ and a smooth action by a finite group so that $M = \overline{M}/G$. Furthermore, we have the following commutative diagram:
\begin{equation*}
\xymatrix{ \pi^*X = \overline{X} \ar[r]^-{\overline{p}} \ar[d]_{\overline{\pi}} & \overline{M}  \ar[d]^{\pi}\\
X  \ar[r]^p & M, }
\end{equation*}
where $\pi$ is the quotient map, $\overline{\pi}$ is an unramified covering and the induced $S^1$-action on $\overline{X}$ is free. Moreover, the group $G$ acts naturally on $\overline{X}$ as the group of deck transformations of $\overline{\pi}$.


Thus $\overline{\omega} = \overline{\pi}^*\omega$ is a symplectic form and by (\cite{FVi3}, Theorem 1.4) its image under the pushforward map $\overline{p}_*(\overline{\omega}) \in H^1(\overline{M},\mathbb{R})$ lies in the open cone over a fibered face of the Thurston norm ball. Since $\overline{\omega}$ is $G$-invariant and the action on $\overline{X}$ is fiber preserving, the class $\overline{\phi} = \overline{p}_*(\overline{\omega})$ is also $G$-invariant. We let $\phi$ be the unique class such that $\pi^*\phi = \overline{\phi}$. By Lemma \ref{fin_action} the restriction $\phi_{reg}$ to $M_{reg}$ lies in the open cone over a fibered face. Finally the naturality of the transfer homomorphism implies that the restriction of $\phi_{reg}$ agrees with $p_*\omega_{reg}$.

$(3) \Longrightarrow (2)$: By assumption $\phi_{reg} = p_*\psi_{reg}$ lies in the open cone over a fibered face of the Thurston norm ball and $\phi_{reg}$ is the restriction of a class $\phi \in H^1(|M|,\mathbb{R})$. In particular, $|M|$ fibers over $S^1$. We first note that $M$ is a \emph{very good} orbifold so that it is a quotient of a manifold $\overline{M}$ by a smooth action of a finite group $G$. For this it suffices to rule out bad 2-suborbifolds by (\cite{BLP}, Corollary 3.28). However, a bad 2-suborbifold is topologically a sphere that is essential in $H_2(|M|,\mathbb{Z})$ and as in the proof of (\cite{Chen}, Lemma 2.3) this implies that $b^+_2(X) = b_2(|M|) - 1$. Thus $|M| = S^2 \times S^1$ and $b^+_2(X) = 0$, contradicting the assumption that $\psi^2 >0$.

Thus since $M$ is very good we can proceed as in the proof of the previous implication. In particular, $M$ is a quotient of a manifold $\overline{M}$ by a smooth action of a finite group $G$, the total space has a finite covering $\overline{X}$ which is a genuine $S^1$-bundle and these bundles fit into a pullback diagram as above. Since a degree one cohomology class on $\overline{M}$ is determined by its restriction to the complement of the branching locus, we deduce that $\overline{\phi}= \pi^*\phi$ and $\overline{p}_*(\overline{\pi}^*\psi)$ agree as cohomology classes. We then note that the construction of $S^1$-invariant forms in \cite{FGM} and its extension to irrational classes (\cite{FVi2}, Theorem 1.1) can be done $G$-equivariantly.

First choose a $G$-invariant representative $\gamma$ of $e(\overline{X})$, which can be obtained as the curvature of a $G$-equivariant angular form. By (\cite{FVi2}, Lemma 2.1) we may write $\gamma = \overline{\phi} \wedge \beta$. After averaging over $G$ this equation still holds, so $\beta$ can be assumed to be $G$-equivariant. Let $\eta$ be a $G$-invariant angular 1-form so that $d\eta = \overline{p}^*\gamma$ and let $\overline{\Omega} \in H^2(\overline{M},\mathbb{R})$ be the unique class such that the following holds in cohomology
$$\overline{\pi}^*\psi - \eta  \wedge \overline{p}^* \thinspace \overline{\phi} = \overline{p}^*\overline{\Omega}.$$
Such an $\overline{\Omega}$ exists in view of the Gysin sequence since the left hand lies in the kernel of $\overline{p}_*$ and since the left hand side is $G$-equivariant so is $\overline{\Omega}$. The fact that $\overline{\pi}^*\psi^2 >0$ implies that $\overline{p}^* \thinspace \overline{\phi}  \wedge \overline{\Omega} > 0$. Thus by (\cite{FVi2}, Lemma 2.2) there is a non-vanishing 2-form representing the class $\overline{\Omega}$ so that $\overline{\phi} \wedge \overline{\Omega} > 0$, again after averaging we may assume that $\overline{\Omega}$ is $G$-invariant. Thus the $S^1$-invariant form
\[\overline{\omega}_{inv}= \eta \wedge \overline{p}^* \thinspace \overline{\phi} + \overline{p}^* \thinspace\overline{\Omega}\]
represents $\overline{\pi}^*\psi$ and descends to an $S^1$-invariant form $\omega_{inv}$ on $X$ which is cohomologous to $\psi$.




\end{proof}
\begin{rem}\label{Chen_rem}
A vital step in the proof of Theorem \ref{class_eff} was Chen's observation that the base orbifold of a symplectic manifold $X$ admitting a fixed point free $S^1$-action is a quotient of a manifold by a finite group action. The main technical point in the proof of (\cite{Chen}, Proposition 1.8) is to rule out bad 2-orbifolds in the base. This is achieved by results relating the Seiberg-Witten invariants of the base orbifold to those of the underlying manifold. 

We sketch a different proof which uses more standard Seiberg-Witten vanishing results. For background on the Seiberg-Witten invariants we refer to \cite{GS} and the references therein. First observe that a bad 2-suborbifold $\Sigma$ in the quotient orbifold $M = X/S^1$ can intersect at most 2 singular curves $L_1,L_2$ each in at most one point. Taking a neighbourhood $N$ of $|\Sigma| \cup L_1$ gives a topological splitting of the base $|M| =  (S^2 \times S^1) \# M'$ so that preimage of the splitting sphere in $|M|$ induces a splitting $X=X_1 \cup_{S} X_2$, where $S$ is either $S^2 \times S^1$ or $S^3$ depending on whether $L_2$ is empty or not. Moreover, as in the proof of (\cite{Chen}, Lemma 2.3) we must have $b_1(M') > 0$ by the assumption that $b_2^+(X) > 0$. If $S$ is a 3-sphere, then $b_2^+(X_2) > 0$ and by taking the covering $\overline{X}$ of $X$ induced by the natural surjection
\[\pi_1(X) \to \pi_1(S^2 \times S^1) \to \mathbb{Z}_n\]
we obtain a splitting of $\overline{X} = \overline{X}_1 \cup_{S^3} \overline{X}_2$, where $b_2^+(\overline{X}_1),b_2^+(\overline{X}_2) \geq 1$. It follows that the Seiberg-Witten invariants of $\overline{X}$ are trivial.

If $S = S^2 \times S^1$, then we take the covering $\overline{X}$ of $X$ induced by a surjection
\[\pi_1(X) \to \pi_1(S^2 \times S^1) * \pi_1(|M'|) \to \mathbb{Z}_n \times \mathbb{Z}_n.\]
The embedded $2$-sphere $S^2 \times \{pt\}$ in $S$ then becomes essential in the covering and $b_1(\overline{X})$, and hence $b^+_2(\overline{X})$, may be assumed to be arbitrarily large. Furthermore, the sphere $S^2 \times \{pt\}$ has trivial self-intersection and consequently the Seiberg-Witten invariants of $\overline{X}$ are trivial. Thus in both cases we obtain a contradiction to the non-vanishing results of Taubes for the Seiberg-Witten invariants of a symplectic 4-manifold.
\end{rem}

\section*{Acknowledgments:}
\noindent We thank P. Su\'{a}rez-Serrato for his stimulating questions and S. Friedl for helpful comments. The hospitality of the Max Planck Institute f\"ur Mathematik in Bonn, where this research was carried out, is also gratefully acknowledged.

 \end{document}